\newtheorem{theorem}{Theorem}
\newtheorem{example}{Example}
\newtheorem{lemma}{Lemma}
\newtheorem{proposition}{Proposition}
\newtheorem{remark}{Remark}
\def\eq{\hspace*{-1.5mm}&=&\hspace*{-1.5mm}}
\def\RR{\mathbb{R}}
\def\<{\langle}
\def\>{\rangle}
\def\n{N}
\def\T{T}
\title{The curve shortening flow in the metric-affine plane}
\author{Vladimir Rovenski\thanks{Department of Mathematics, University of Haifa, Mount Carmel, 31905 Haifa, Israel.
\newline
E-mail: \texttt{vrovenski@univ.haifa.ac.il}}}
\begin{document}

\date{}

\maketitle

\begin{abstract}
We investigate for the first time the curve shortening flow in the metric-affine plane
and prove that under simple geometric condition it shrinks a closed convex curve to a ``round point" in finite time.
This generalizes the classical result by M.~Gage and R.S.~Hamilton about convex curves in Euclidean plane.

\vskip1.5mm\noindent
\textbf{Keywords}: curve shortening flow, affine connection, curvature, convex

\vskip1.5mm
\noindent
\textbf{Math. Subject Classifications (2010)}
Primary 53C44; Secondary 53B05 
\end{abstract}

\section{Introduction}

The one-dimensional mean curvature flow is called the \textit{curve shortening flow} (CSF),
because it is the negative $L^2$-gradient flow of the length of the interface,
and it is used in modeling the dynamics of melting solids.
The CSF deals with a family of closed curves $\gamma$ in the plane $\RR^2$
with a Euclidean metric $g=\<\,\cdot\,,\cdot\,\>$ and the Levi-Civita connection $\nabla$,
satisfying the initial value problem (with parabolic partial differential equation)
\begin{eqnarray}\label{E-01}
 {\partial\gamma}/{\partial t}=k\n,\quad \gamma|_{\,t=0}=\gamma_{0}.
\end{eqnarray}
Here, $k$ is the curvature of $\gamma$ with respect the unit inner normal vector $\n$
and $\gamma_{0}$ is an embedded plane curve, see survey in \cite{cz,zhu}.
The flow defined by \eqref{E-01} is invariant under translations and rotations
Recall that the curvature of a convex plane curve is positive.
The next theorem by M.~Gage and R.S.~Hamilton \cite{GH86} describes this flow of convex curves.

\begin{theorem}\label{T-01z}
 $a)$ Under the CSF \eqref{E-01}, a convex closed curve in the Euclidean plane smoothly shrinks to a~point in finite time.
 $b)$ Rescaling in order to keep the length constant, the flow converges exponentially fast to a circle in $C^\infty$.
\end{theorem}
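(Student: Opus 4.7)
The plan is to follow the classical Gage--Hamilton strategy: derive evolution equations for the basic geometric quantities, apply a maximum principle to preserve convexity and control pointwise curvature, combine this with Gage's integral isoperimetric estimate to keep the shape from degenerating, and finally use parabolic regularity together with a monotone entropy to identify a round circle as the rescaled limit.

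First I would compute, along \eqref{E-01}, the evolution of the arc-length element, the total length $L(t)$, the enclosed area $A(t)$, and the curvature $k$. A direct computation gives $\partial_t(ds) = -k^2\, ds$, hence $dL/dt = -\int k^2\, ds$, while Gauss--Bonnet (or a direct computation from $\partial_t\gamma = k\n$) yields $dA/dt = -2\pi$. In particular $A(t) = A(0) - 2\pi t$, so the maximal existence time is at most $T = A(0)/(2\pi)$. The curvature satisfies the parabolic PDE $\partial_t k = \partial_s^2 k + k^3$, and the parabolic maximum principle shows that $k_{\min}(t)$ is nondecreasing; since $k>0$ at $t=0$ by convexity, the curve stays strictly convex throughout its existence.

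The core analytic input is Gage's isoperimetric inequality $\int k^2\, ds \geq \pi L/A$ for smooth closed convex curves. Combined with the evolution equations above, it gives $\frac{d}{dt}(L^2/A) \leq 0$, so the isoperimetric ratio stays bounded by its initial value. Since $\int k\, ds = 2\pi$ for a convex closed curve, this bound converts a differential inequality for $\max_s k$ obtained from $\partial_t k = \partial_s^2 k + k^3$ into a uniform upper bound on the rescaled curvature $k\cdot L$. Standard parabolic Schauder estimates then give uniform bounds on all arc-length derivatives $\partial_s^j k$ of the length-normalized flow, so the rescaled curves form a precompact family in $C^\infty$. Together with finite extinction from the first step, this gives part~(a).

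For part~(b), Hamilton's entropy $\int k \log k\, ds$ (suitably rescaled) is monotone under the flow and has circles as its only critical points; combined with $C^\infty$ precompactness this forces smooth subsequential and then full convergence of the rescaled curves to a round circle. Exponential decay in $C^\infty$ comes from linearizing the normalized flow at the limiting circle and verifying that the spectrum of the linearization is strictly positive after factoring out translations. The hard part will be the curvature control in the third paragraph: Gage's inequality is a nontrivial convex-geometric ingredient, and converting it into a \emph{quantitative} upper bound on $\max k$ that survives rescaling is the analytic heart of the proof; the evolution-equation computations, the maximum-principle arguments, and the final linearization are comparatively mechanical.
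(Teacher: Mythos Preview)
The paper does not prove Theorem~\ref{T-01z} at all: it is quoted as the classical Gage--Hamilton result with a reference to \cite{GH86}, and serves purely as background for the paper's own theorems (Theorems~\ref{T-01main} and~\ref{T-02main}) in the metric-affine setting. There is therefore no ``paper's own proof'' to compare your proposal against.

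As a standalone outline of the Gage--Hamilton argument, your sketch is broadly faithful: the evolution equations, the area law $dA/dt=-2\pi$, preservation of convexity via the maximum principle, Gage's inequality $\int k^2\,ds\ge \pi L/A$ giving monotonicity of $L^2/A$, and the entropy $\int k\log k\,ds=\int\log k\,d\theta$ for the normalized limit are all the right ingredients. The one place where your description is too optimistic is the passage from bounded isoperimetric ratio to a pointwise bound on the rescaled curvature: this does not follow from a differential inequality for $\max_s k$ together with $\int k\,ds=2\pi$ alone. In \cite{GH86} (and in Gage's earlier paper) this step goes through additional convex-geometric estimates (a Bonnesen-type width/inradius bound and a comparison of the maximum curvature with a ``median'' value), not a direct ODE argument on $k_{\max}$. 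You flag this as the hard part, which is correct, but the mechanism you name is not the one that actually works.
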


This theorem and further result by M.A.~Grayson, \cite{Gr} (that the flow moves any closed embedded in the Euclidean plane curve in a finite time to a convex curve)
have many generalizations and applications in natural and computer sciences.
For example, the \textit{anisotropic curvature-eikonal flow} (ACEF) for closed convex curves, see \cite[Section~3.4]{cz},
\begin{equation}\label{E-20}
  {\partial\gamma}/{\partial t}= (\Phi(\theta)k + \lambda\,\Psi(\theta))\,\n,\quad
 \gamma|_{\,t=0} =\gamma_{0} ,
\end{equation}
where $\Phi>0$ and $\Psi$ are $2\pi$-periodic functions of the normal to $\gamma(\cdot\,,t)$ angle $\theta$
and $\lambda\in\RR$, generalizes the CSF.
Anisotropy of
\eqref{E-20} is indispensable in dealing with
phase transition, crystal growth, frame propagation, chemical reaction, and mathematical biology.
The~particular case of ACEF, when $\Phi$ and $\Psi$ are positive constants, serves as a model for essential biological processes, see \cite{hww}.
On the other hand, \eqref{E-20} is a particular case of the flow
\begin{equation*}
 {\partial\gamma}/{\partial t}=F(\gamma,\theta,k)\n,\quad \gamma|_{\,t=0}=\gamma_{0},
\end{equation*}
see \cite[Chapter~1]{cz},
where $F=F(x,y,\theta,q)$ is a given function in $\RR^4$, $2\pi$-periodic in $\theta$.

During last decades, many results have appeared  in the differential geometry of a manifold with an affine connection $\bar\nabla$
(which is a method for transporting tangent vectors along curves),
e.g., collective monographs \cite{cl,mikes}. The difference ${\mathfrak T}=\bar\nabla-\nabla$
(of $\bar\nabla$ and the Levi-Civita connection $\nabla$ of $g$), is a (1,2)-tensor, called \textit{contorsion tensor}.
Two~interes\-ting particular cases of $\bar\nabla$ (and $\mathfrak{T}$) are as follows.

1) \textit{Metric compatible connection}: $\bar\nabla g=0$, i.e., $\<\mathfrak{T}(X,Y),Z\>=-\<\mathfrak{T}(X,Z),Y\>$.
Such manifolds appear in almost Hermitian and Finsler geometries and
are central in Einstein-Cartan theory of gravity, where the torsion is represented by the spin tensor of matter.

2) \textit{Statistical connection}: $\bar\nabla$ is torsionless and the rank 3 tensor $\bar\nabla g$ is symmetric
in all its entries, i.e., $\<\mathfrak{T}(X,Y),Z\>$ is fully symmetric.
Statistical manifold structure, which is related to geometry of a pair of dual affine connections,
is central in Information Geometry, see \cite{Hi}; affine hypersurfaces in $\RR^{n+1}$ are a natural source of statistical manifolds.

There are no results about the CSF in metric-affine geometry.
The \textit{metric-affine plane} is $\RR^2$ endowed with
a Euclidean metric $g$ and an affine connection $\bar\nabla$. 
Our objective is to study the CSF in the metric-affine plane and to generalize
Theorem~\ref{T-01z} for convex curves in $(\RR^2,g,\bar\nabla)$.
Thus, we replace \eqref{E-01} by the following initial value problem:
\begin{equation}\label{E-05}
 {\partial\gamma}/{\partial t}= \bar k \,\n,\quad \gamma|_{\,t=0} =\gamma_{0},
\end{equation}
where $\bar k$ is the curvature of a curve $\gamma$ with respect to $\bar\nabla$ and $\gamma_0$ is a closed convex curve.
Note that \eqref{E-05} is the particular case (when $\Phi=1$ and $\lambda\,\Psi(\theta)=\Psi(\theta)$) of the ACEF. 
 Put
\[
 k_0:=\min\{k(x):\,x\in\gamma_0\}>0.
\]
Let $\{e_{1},e_{2}\}$ be the orthonormal frame in $(\RR^{2},g,\bar\nabla)$.
In the paper we assume that
\begin{equation}\label{E-05p}
 \mbox{{the contorsion tensor \ ${\mathfrak T}$ \ is \ $\nabla$-parallel}},
\end{equation}
i.e., ${\mathfrak T}$ has constant components ${\mathfrak T}^k_{ij}=\<{\mathfrak T}(e_i,e_j),\,e_k\>$
and constant norm $\|{\mathfrak T}\|=c\ge0$.

 Let $\gamma: S^{1}\to\RR^{2}$ be a closed curve in the metric-affine plane with the arclength parame\-ter~$s$.
Then $\T=\partial\gamma/\partial s$ is the unit vector tangent to $\gamma$.
In this case, $k=\<\nabla_{\T}\,\T,\n\>$ and
the curvature of $\gamma$ with respect to an affine connection $\bar\nabla$ is $\bar k=\<\bar\nabla_{\T}\,\T,\n\>$,
we obtain
\begin{equation}\label{E-04}
 \bar k = k + \Psi,
\end{equation}
where $\Psi$ is the following function on $\gamma$:
\begin{equation}\label{E-01a}
 \Psi = \<{\mathfrak T}(\T,\T), \n\>.
\end{equation}
By the assumptions $\|{\mathfrak T}\|=c$, see \eqref{E-05p}, and $\|T\|=\|N\|=1$, we have
\begin{equation}\label{E-01b}
 |\,\Psi|\le c.
\end{equation}
The convergence of the ACEF \eqref{E-20} when $\Phi$ and $\Psi$ are positive
has been studied in \cite[Chapter~3]{cz}.
However, our function $\Psi$ in \eqref{E-04} takes both positive and negative values,
and \cite[Theorem~3.23]{cz} is not applicable to our flow of \eqref{E-05}.
By~this reason, we independently develop the geometrical approach to prove the convergence of \eqref{E-05} to a "round point".
Our~main goal is the following theorem, generalizing Theorem~\ref{T-01z}(a).

\begin{theorem}\label{T-01main}
Let $\gamma_{0}$ be a closed convex curve in the metric-affine plane with condition $k_0 > 2\,c$.
Then \eqref{E-05} has a unique solution $\gamma(\cdot,t)$,
and it exists
at a finite time interval $[\,0,\omega)$, and as $t\uparrow\omega $, the solution $\gamma(\cdot,t)$ converges to a point.
Moreover, if $\,k_0 > 3\,c$ then
 $\omega\le \frac{A(\gamma_0)}{2\pi}\cdot\frac{k_0-2c}{k_0-3c}$,
 {where}
 $A(\gamma_0)$
 {is the area enclosed by}
 $\gamma_0$.
\end{theorem}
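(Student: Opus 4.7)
The strategy is to reduce \eqref{E-05} to a scalar parabolic equation in the support function, propagate convexity by a maximum principle on $\bar k$ (which identifies $k_0>2c$ as the sharp threshold), and then extract the two quantitative conclusions from the evolution equations of the enclosed area $A$ and the perimeter $L$. The structural fact that drives both quantitative bounds is the identity
$$\int_0^{2\pi}\Psi(\theta)\,d\theta=0.$$
I would verify this first: by \eqref{E-01a}, \eqref{E-05p} and the choice of the orthonormal frame $\{e_1,e_2\}$, the function $\Psi$ is a homogeneous cubic polynomial in $(\cos\theta,\sin\theta)$, hence a trigonometric polynomial containing only the frequencies $\pm 1$ and $\pm 3$.

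\textbf{Reformulation and preservation of convexity.}
On a small time interval I parametrize $\gamma(\cdot,t)$ by the angle $\theta$ of its outward unit normal and pass to the support function $h(\theta,t)$, so that $1/k = h_{\theta\theta}+h$ and \eqref{E-05} becomes the scalar quasilinear parabolic equation
$$h_t = -\bar k = -\bigl(1/(h_{\theta\theta}+h)+\Psi(\theta)\bigr),$$
which is strictly parabolic whenever the curve is strictly convex; standard theory (e.g.~\cite[Ch.~1]{cz}) supplies a unique smooth solution on a maximal interval $[0,\omega)$. Differentiating twice in $\theta$ yields
$$\partial_t\bar k = k^2\bigl(\bar k_{\theta\theta}+\bar k\bigr),$$
and at a spatial minimum of $\bar k(\cdot,t)$ the right-hand side is bounded below by $k^2\bar k$. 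A short bootstrap (positivity of $k$ lets the maximum principle produce a lower bound on $\bar k$, which in turn restores positivity of $k$) yields, for every $t\in[0,\omega)$,
$$\bar k(\cdot,t)\ge\bar k_{\min}(0)\ge k_0-c,\qquad k(\cdot,t)=\bar k-\Psi\ge k_0-2c>0,$$
so the curve stays strictly convex and the support-function formulation is valid up to $\omega$.

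\textbf{Area bound, length decay, and convergence to a point.}
From $\partial_t\gamma=\bar k\,\n$ one has $dA/dt=-\oint\bar k\,ds$; substituting $ds=d\theta/k$ and using $\int_0^{2\pi}\Psi\,d\theta=0$,
$$\frac{dA}{dt}=-2\pi-\int_0^{2\pi}\frac{\Psi(\theta)}{k(\theta,t)}\,d\theta\le -2\pi+\frac{2\pi c}{k_0-2c}=-2\pi\cdot\frac{k_0-3c}{k_0-2c}$$
whenever $k_0>3c$, which by integration together with $A(t)\ge 0$ gives the claimed upper bound on $\omega$. For the general case $k_0>2c$ I instead work with the length: the same cancellation gives $\oint k\Psi\,ds=\int_0^{2\pi}\Psi\,d\theta=0$, so
$$\frac{dL}{dt}=-\oint k\bar k\,ds=-\int_0^{2\pi}k\,d\theta\le -\frac{4\pi^2}{L},$$
by Cauchy-Schwarz against $\int_0^{2\pi}1/k\,d\theta=L$. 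Hence $L^2(t)\le L^2(0)-8\pi^2 t$, and since the diameter of a convex curve is at most $L/2$ it tends to zero in finite time. Together with the fact that $\bar k\ge k_0-c>0$ makes the family $\{\gamma(\cdot,t)\}$ strictly nested, the intersection is a single point, which is the limit.

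\textbf{Main obstacle.}
The delicate step is the bootstrap in the preservation-of-convexity argument: the diffusion coefficient $k^2=(\bar k-\Psi)^2$ in the $\bar k$-equation degenerates if $k$ reaches zero, so the maximum principle has to be coupled with the pointwise bound $|\Psi|\le c$ in order to close, and the threshold $k_0>2c$ is precisely what this bootstrap requires. A secondary concern is ruling out that the curves collapse to a lower-dimensional limit set; this is handled by the length/diameter estimate combined with strict nesting.
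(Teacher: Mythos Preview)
Your reduction to the support-function equation, the maximum-principle argument for $\bar k$ (yielding $k\ge k_0-2c$), and the area computation leading to the bound $\omega\le \frac{A(0)}{2\pi}\cdot\frac{k_0-2c}{k_0-3c}$ are all correct and match the paper. The length identity $dL/dt=-\int_0^{2\pi}k\,d\theta$ together with Cauchy--Schwarz, giving $d(L^2)/dt\le -8\pi^2$, is a cleaner route to \emph{finite-time existence} than the paper's argument (which reduces $\Psi$ by translations and rotations and then compares with two moving circles via the containment principle). So on that point your approach is an improvement.

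The gap is in the step ``convergence to a point''. Your inequality $L^2(t)\le L^2(0)-8\pi^2 t$ shows $\omega\le L^2(0)/(8\pi^2)$, but it does \emph{not} show $L(t)\to 0$ as $t\uparrow\omega$: nothing you have written rules out that the curvature blows up at some $\omega$ strictly smaller than $L^2(0)/(8\pi^2)$, while the curves still enclose a convex body of positive diameter. Strict nesting (from $\bar k>0$) only tells you that the Hausdorff limit is a compact convex set; it is a single point only if the diameter actually tends to zero, which you have not established at $t=\omega$. In other words, you proved that $L$ would vanish by time $L^2(0)/(8\pi^2)$ \emph{if the flow survived that long}, but you have not excluded an earlier singularity with $L(\omega^-)>0$.

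The paper closes this gap by a separate barrier argument: assuming the limit set has nonempty interior, it places a disk of radius $2\rho$ inside all $\gamma(\cdot,t)$ and studies $\phi=\bar k/(S-\rho)$. At a space--time maximum of $\phi$ one obtains $\rho k^2\le 2k+c$, hence a uniform upper bound on $k$; by parabolic regularity the flow would then extend past $\omega$, a contradiction. Thus the limit has zero area, so it is a point or a segment, and the segment is excluded because $k\ge k_0-2c>0$ uniformly (a curve collapsing onto a segment must have $\min_\theta k\to 0$). You need this ``no-interior $\Rightarrow$ bounded curvature'' step, or an equivalent one linking $k_{\max}\to\infty$ to $L\to 0$, to finish; your length estimate and nesting alone do not supply it.
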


Nonetheless, the approach of \cite{cz}
to the norma\-lized flow of \eqref{E-20} in the contracting case still works without the positivity of $\Psi$, see \cite[Remark~3.14]{cz}.	
Based on this result and Theorem~\ref{T-01main}, we obtain the following result, generalizing Theorem~\ref{T-01z}(b).

\begin{theorem}\label{T-02main}
Consider the normalized curves $\tilde\gamma(\cdot, t) = (2(\omega-t))^{1/2}\gamma(\cdot, t)$, see Theorem~\ref{T-01main},
and introduce a new time variable
 $\tau = -(1/2)\log (1-\omega^{-1}\,t) \in [\,0,\infty)$.
 Then the curves $\,\tilde\gamma(\cdot, \tau)$ converge to the unit circle smoothly as $\tau\to\infty$.
\end{theorem}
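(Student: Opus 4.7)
The plan is to invoke the normalized-flow framework of \cite[Chapter~3]{cz}: Theorem \ref{T-01main} provides the finite-time collapse to a ``round point'' needed to set up the parabolic rescaling, and \cite[Remark~3.14]{cz} guarantees that the Chai--Zhu analysis of the contracting normalized ACEF still applies in our setting where $\Psi$ changes sign.

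First, after translating so that $\gamma(\cdot,t)$ shrinks to the origin, one rescales parabolically by the factor $(2(\omega-t))^{-1/2}$ and passes to the new time $\tau$. A direct computation using $dt/d\tau = 2(\omega-t) = 2\omega\,e^{-2\tau}$ yields a rescaled evolution of the form
\[
\frac{\partial\tilde\gamma}{\partial\tau} = \tilde\gamma + \bigl(\tilde k + \mu(\tau)\,\Psi\bigr)\,\tilde N,\qquad \mu(\tau) = \sqrt{2\omega}\,e^{-\tau},
\]
where $\tilde k$ and $\tilde N$ are the Euclidean curvature and inner unit normal of $\tilde\gamma$, and $\Psi = \langle{\mathfrak T}(T,T),N\rangle$ is invariant under homothety. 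The crucial observation is that $|\mu(\tau)\,\Psi|\le c\sqrt{2\omega}\,e^{-\tau}$, and by the $\nabla$-parallelism of $\mathfrak T$ all $\theta$-derivatives of this perturbation decay exponentially as $\tau\to\infty$. Hence our equation is an exponentially small smooth perturbation of the normalized classical CSF, whose unique attractor among closed convex curves is the unit circle.

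Second, I would establish uniform a priori estimates for $\tilde\gamma(\cdot,\tau)$ on $[0,\infty)$. Convexity is preserved by Theorem \ref{T-01main}, and the quantitative bound on $\omega$ combined with the isoperimetric inequality applied to the rescaled enclosed area and length yields control of the rescaled diameter and in-radius uniformly in $\tau$. Two-sided pinching bounds $0<c_1\le\tilde k\le c_2$ follow from a maximum-principle argument applied to the evolution of $\tilde k$, and higher-order bounds on $\partial_\theta^j\tilde k$ follow by standard parabolic bootstrap, with the $\Psi$-terms contributing only through the exponentially decaying corrections above. Once these $C^\infty$ bounds are in hand, \cite[Theorem~3.23]{cz} --- extended to sign-indefinite $\Psi$ in the contracting regime via \cite[Remark~3.14]{cz} --- shows that $\tilde\gamma(\cdot,\tau)$ converges smoothly to the unique closed convex solution of $\tilde\gamma + \tilde k\,\tilde N = 0$, namely the unit circle; the convergence is full rather than merely subsequential thanks to a linearization/Lyapunov argument at the circle.

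The main obstacle I anticipate is the uniform strictly positive lower bound on $\tilde k$ across the whole interval $\tau\in[0,\infty)$. Theorem \ref{T-01main} yields a useful pinching constant only at the initial time, and because $\Psi$ is sign-indefinite the simpler maximum-principle comparisons of \cite{cz} do not apply verbatim; closing the estimate requires exploiting the exponential smallness of $\mu(\tau)\Psi$ together with the preserved convexity to propagate the pinching all the way to $\tau=\infty$, which is precisely the step granted by \cite[Remark~3.14]{cz}.
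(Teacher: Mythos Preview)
Your proposal is essentially the paper's own argument: rescale parabolically, observe that the $\Psi$-contribution enters the normalized equation with the exponentially decaying factor $\sqrt{2\omega}\,e^{-\tau}$, and then invoke the Chou--Zhu analysis of the normalized contracting ACEF (whose validity for sign-indefinite $\Psi$ is exactly what \cite[Remark~3.14]{cz} records), concluding via the Abresch--Langer classification of self-similar solutions. The one point where your sketch diverges from what the paper (and \cite{cz}) actually does is the source of the uniform two-sided bounds on $\tilde k$: these come not from a direct maximum-principle/isoperimetric argument as you suggest, but from a uniform bound on the entropy $\mathcal{E}(\tilde\gamma)=\frac{1}{2\pi}\int_0^{2\pi}\log\tilde k\,d\theta$, which then feeds the diameter, length, upper curvature, and finally lower curvature bounds in that order.
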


In Section~\ref{sec:T1}, we prove Theorem~\ref{T-01main} in several steps,
some of them generalize the steps in the proof of \cite[Theorem~1.3]{zhu}.
In Section~\ref{sec:T2}, we
prove Theorem~\ref{T-02main} about the normalized flow \eqref{E-05},
following the proof of convergence of the normalized flow \eqref{E-20} in the contracting~case.

Theorem~\ref{T-01main} can be easily extended to the case of non-constant contorsion tensor $\mathfrak{T}$ of small norm, but we can not now reject the assumption \eqref{E-05p} for Theorem~\ref{T-02main}, since its proof is based on the result for the normalized ACEF, see \cite{cz}, where $\Psi$ depends only on $\theta$.

\section{Proof of Theorem~2}
\label{sec:T1}

Recall the axioms of affine connections $\bar\nabla:\mathfrak{X}_M\times\mathfrak{X}_M\to\mathfrak{X}_M$ on a manifold $M$, e.g., \cite{mikes}:
\[
 \bar\nabla_{f X_1+X_2}Y = f\,\bar\nabla_{X_1}Y + \bar\nabla_{X_2}Y,\quad
 \bar\nabla_X(f Y_1+Y_2) = f\,\bar\nabla_XY_1 + X(f)\cdot Y_1 + \bar\nabla_XY_2
\]
for any vector fields $X, Y, X_1, X_2, Y_1, Y_2$ and smooth function $f$ on $M$.

Let $\theta$ be the \textit{normal angle} for a convex closed curve $\gamma: S^{1}\to\RR^{2}$, i.e.,
 $\cos\theta =-\<\n, e_{1}\>$ and $\sin\theta =-\<\n, e_{2}\>$.
Hence,
\begin{equation}\label{E-01c}
 \n=-[\cos\theta,\, \sin\theta],\quad
 \T= [-\sin\theta,\, \cos\theta].
\end{equation}

\begin{lemma}
The function $\Psi$ given in \eqref{E-01a} has the following view in the coordinates:
\begin{equation}\label{E-phi0}
  \Psi = a_{30}\sin^3\theta + a_{03}\cos^3\theta +a_{12}\sin \theta + a_{21}\cos \theta,
\end{equation}
where $a_{ij}$ are given by
\begin{eqnarray}\label{E-phi}
\nonumber
 && a_{12}={\mathfrak T}^{2}_{12}+{\mathfrak T}^{2}_{21}-{\mathfrak T}^{1}_{11},\qquad\quad\ \
 a_{21}={\mathfrak T}^{1}_{12}+{\mathfrak T}^{1}_{21}-{\mathfrak T}^{2}_{22},\\
 && a_{03} = {\mathfrak T}^{2}_{22} -{\mathfrak T}^{1}_{22} -{\mathfrak T}^{1}_{12} -{\mathfrak T}^{1}_{21},\quad
 a_{30} = {\mathfrak T}^{1}_{11} -{\mathfrak T}^{2}_{11} - {\mathfrak T}^{2}_{12}-{\mathfrak T}^{2}_{21}.
\end{eqnarray}
\end{lemma}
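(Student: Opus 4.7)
The plan is to verify the identity by a direct coordinate computation starting from the definition \eqref{E-01a} and the expressions \eqref{E-01c} for $\T$ and $\n$ in terms of the normal angle $\theta$.

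First, I would set $T^1=-\sin\theta$, $T^2=\cos\theta$, $N^1=-\cos\theta$, $N^2=-\sin\theta$ and use the multilinearity of the $(1,2)$-tensor $\mathfrak{T}$, together with the orthonormality of $\{e_1,e_2\}$, to rewrite
\begin{equation*}
\Psi \,=\, \<\mathfrak{T}(\T,\T),\n\> \,=\, \sum_{i,j,k\in\{1,2\}} T^i\,T^j\,N^k\,\mathfrak{T}^k_{ij}.
\end{equation*}
This is a sum of eight scalar terms, each of which is the product of one component $\mathfrak{T}^k_{ij}$ with a trigonometric monomial of total degree three.

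Next, I would group these eight terms according to the four basic monomials $\sin^3\theta$, $\cos^3\theta$, $\sin^2\theta\cos\theta$, $\sin\theta\cos^2\theta$, keeping careful track of the signs introduced by $\n=-[\cos\theta,\sin\theta]$. This produces an intermediate expression
\begin{equation*}
\Psi \,=\, A\sin^3\theta + B\cos^3\theta + C\sin^2\theta\cos\theta + D\sin\theta\cos^2\theta,
\end{equation*}
in which each of $A,B,C,D$ is an explicit integer linear combination of the components $\mathfrak{T}^k_{ij}$ dictated by the index pattern of the corresponding triple $(i,j,k)$.

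Finally, to match the form \eqref{E-phi0}, I would apply the Pythagorean identity in the form $\sin^2\theta\cos\theta=\cos\theta-\cos^3\theta$ and $\sin\theta\cos^2\theta=\sin\theta-\sin^3\theta$; this linearly transforms the basis $\{\sin^3\theta,\cos^3\theta,\sin^2\theta\cos\theta,\sin\theta\cos^2\theta\}$ into $\{\sin^3\theta,\cos^3\theta,\sin\theta,\cos\theta\}$. Regrouping then yields \eqref{E-phi0}, and reading off the four coefficients gives the formulas for $a_{30},a_{03},a_{12},a_{21}$ in \eqref{E-phi}. There is no conceptual difficulty here; the only thing requiring care is the sign bookkeeping through the substitutions, and the fact that $a_{12}$ and $a_{21}$ end up as the coefficients of the linear terms $\sin\theta$ and $\cos\theta$, not of the degree-three monomials in which they first appeared.
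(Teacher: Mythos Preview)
Your proposal is correct and follows essentially the same route as the paper: expand $\mathfrak{T}(\T,\T)$ bilinearly using \eqref{E-01c}, take the inner product with $\n$ to obtain the expression in the monomials $\sin^3\theta,\cos^3\theta,\sin^2\theta\cos\theta,\sin\theta\cos^2\theta$, and then rewrite in the basis $\{\sin^3\theta,\cos^3\theta,\sin\theta,\cos\theta\}$. The paper stops at the intermediate four-monomial expression and simply asserts that \eqref{E-phi0}--\eqref{E-phi} follow; you make the final Pythagorean basis change explicit, which is a welcome clarification but not a different idea.
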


\begin{proof}
Using \eqref{E-01c}, we find
\begin{eqnarray*}
 {\mathfrak T}(\T,\T) \eq {\mathfrak T}(e_1,e_1)\sin^2\theta - ({\mathfrak T}(e_1,e_2)+{\mathfrak T}(e_2,e_1))\sin\theta\cos\theta
  +{\mathfrak T}(e_2,e_2)\cos^2\theta ,\\
 \<{\mathfrak T}(\T,\T),\n\> \eq -{\mathfrak T}^{2}_{11}\sin^{3}\theta
 +({\mathfrak T}^{2}_{12}{+}{\mathfrak T}^{2}_{21}{-}{\mathfrak T}^{1}_{11})\sin^{2}\theta\cos\theta \\
 &&\hskip-1.5mm +\,({\mathfrak T}^{1}_{12}{+}{\mathfrak T}^{1}_{21}{-}{\mathfrak T}^{2}_{22})\sin\theta\cos^{2}\theta
 -{\mathfrak T}^{1}_{22}\cos^{3}\theta.
\end{eqnarray*}
From this and the definition ${\mathfrak T}_{ij}=\sum_{\,k}{\mathfrak T}^{k}_{ij}\,e_{k}$
the equalities \eqref{E-phi0} and \eqref{E-phi} follow.
\end{proof}

\begin{remark}\rm
By equalities $\T_\theta=\n$, $\n_\theta=-\T$ and \eqref{E-01a}, we obtain the following:
\begin{equation*}
 |\Psi_{\theta\theta}+\Psi\,|
 = |2\<{\mathfrak T}(\n,\n), \n\> -4\<{\mathfrak T}(\n,\T), \T\>-3\<{\mathfrak T}(\T,\T), \n\>|
 \le 9\,c.
\end{equation*}
\end{remark}

\begin{example}\rm
Recall the Frenet--Serret formulas (with the $\nabla$-curvature $k$ of $\gamma$):
\begin{equation}\label{E-02}
 \nabla_{\T}\,\T = k\, \n,\quad
 \nabla_{\T}\,\n = -k\,\T.
\end{equation}
 For the affine connection $\bar\nabla$, using \eqref{E-02} we obtain
\begin{equation}\label{E-03}
 \bar\nabla_{\T}\,\T =k\,\n+{\mathfrak T}(\T,\T),\quad
 \bar\nabla_{\T}\,\n =-k\,\T+{\mathfrak T}(\T,\n).
\end{equation}
By \eqref{E-03}, the Frenet--Serret formulas $\bar\nabla_{\T}\,T=\bar k\,N$ and $\bar\nabla_{\T}\,N=-\bar k\,T$
(with the $\bar\nabla$-curvature $\bar k$ of $\gamma$) hold for any curve $\gamma$
if and only if
\[
 \<{\mathfrak T}(\T,\T), \n\>=-\<{\mathfrak T}(\T,\n), \T\>,\quad
 \<{\mathfrak T}(\T,\n), \n\>=0=\<{\mathfrak T}(\T,\T), \T\>.
\]
In this case, we have in coordinates the following symmetries:
\begin{eqnarray*}
 && {\mathfrak T}^{1}_{12}=-{\mathfrak T}^{2}_{11},\quad
 {\mathfrak T}^{1}_{21}=0={\mathfrak T}^{2}_{22},\quad
 {\mathfrak T}^{2}_{21}=-{\mathfrak T}^{1}_{22},\quad
 {\mathfrak T}^{2}_{12}=0={\mathfrak T}^{1}_{11},\\
 && a_{12}=-{\mathfrak T}^{1}_{22},\quad
 a_{21}=-{\mathfrak T}^{2}_{11},\quad
 a_{03} = -a_{30} = {\mathfrak T}^{2}_{11} -{\mathfrak T}^{1}_{22},
\end{eqnarray*}
and the formula
 $\,\Psi =
 a_{30}(\sin 3\theta -\cos 3\theta)
 +a_{12}\sin\theta
 +a_{21}\cos\theta$.
\end{example}

The~\textit{support function} $S$ of a convex curve $\gamma$ is given by, e.g. \cite{zhu},
\begin{equation}\label{E-S1}
 S(\theta)=\<\gamma(\theta),\, -\n\> = \gamma^1(\theta)\cos\theta+\gamma^2(\theta)\sin\theta.
\end{equation}
For example, a circle of radius $\rho$ has $S(\theta)\equiv\rho$.
Since $\<\,\partial\gamma/\partial\theta, \n\>=0$, the derivative $S_{\theta}$ is
\begin{eqnarray*}
  S_{\theta}(\theta) = -\gamma^1(\theta)\sin\theta+\gamma^2(\theta)\cos\theta,
\end{eqnarray*}
and $\gamma$ can be represented by the support function and parameterized by $\theta$, see \cite{zhu},
\begin{equation}\label{E-06}
 \gamma^{1}= S\cos\theta- S_\theta\sin\theta,\quad
 \gamma^2=S\sin\theta+S_\theta\cos\theta.
\end{equation}
This yields the following known formula for the curvature of $\gamma(\theta)$:
\begin{equation}\label{E-07}
 k=({S_{\theta\theta}+S})^{-1}.
\end{equation}
Then, according to \eqref{E-04} and \eqref{E-07},
\begin{equation}\label{E-08}
 \bar k = ({S_{\theta\theta}+S})^{-1} + \Psi.
\end{equation}

Let $\widehat\gamma(u,t):S^1\times[\,0,T)\rightarrow \RR^{2}$ be a family of closed curves satisfy\-ing~\eqref{E-05}.
We will use the normal angle $\theta$ to parameterize each curve:
$\gamma(\theta,t)=\widehat\gamma(u(\theta,t),t)$.

\begin{proposition}
The support function ${S}(\cdot\,,t) =\<\gamma(\cdot\,,t), -\n\>$ of $\gamma(\cdot\,,t)$ satisfies the following partial differential equation:
\begin{equation}\label{E-10}
 {S}_t = -({S_{\theta\theta}+S})^{-1} -\Psi.
\end{equation}
\end{proposition}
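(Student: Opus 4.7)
The plan is to differentiate the definition $S(\theta,t) = \langle \gamma(\theta,t), -\n(\theta)\rangle$ directly in $t$ at a fixed normal angle $\theta$. Since by \eqref{E-01c} the vector $\n = -[\cos\theta,\sin\theta]$ is a function of $\theta$ alone with no explicit $t$-dependence, differentiation passes cleanly through to give
\[
 S_t(\theta,t) = \langle \gamma_t(\theta,t),\, -\n(\theta)\rangle,
\]
so only the component of $\gamma_t$ along $\n$ will contribute. This is the standard reason why the support function records only the normal velocity of a moving curve, and it is the conceptual pivot of the entire argument.

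Next I would unfold the relation $\gamma(\theta,t) = \widehat\gamma(u(\theta,t),t)$ by the chain rule:
\[
 \gamma_t(\theta,t) = \widehat\gamma_u\,u_t(\theta,t) + \widehat\gamma_t(u(\theta,t),t).
\]
The first summand is tangent to the curve because $\widehat\gamma_u$ is parallel to $\T$; the second equals $\bar k\,\n$ by the flow equation \eqref{E-05}. Hence $\gamma_t = \alpha\,\T + \bar k\,\n$ for a scalar $\alpha$ that encodes the reparameterization. Pairing against $-\n$ and using orthonormality $\langle\T,\n\rangle=0$, $\langle\n,\n\rangle=1$ annihilates the tangential piece and yields
\[
 S_t = -\bar k.
\]

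Finally, substituting the formula $\bar k = (S_{\theta\theta}+S)^{-1} + \Psi$ from \eqref{E-08} delivers \eqref{E-10}. The only subtle point to watch is the reparameterization: the map $u(\theta,t)$ is genuinely time-dependent, so $\gamma_t(\theta,\cdot)$ at fixed $\theta$ differs from $\widehat\gamma_t(u,\cdot)$ at fixed $u$ by a tangential drift. But this drift is orthogonal to $\n$ and hence invisible to $S$, so no bookkeeping for $u_t$ is actually needed. This makes the proof essentially a one-line computation, with the Frenet data for $\bar\nabla$ absorbed into the single function $\Psi$.
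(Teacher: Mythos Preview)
Your argument is correct and follows essentially the same route as the paper: use the chain rule for $\gamma(\theta,t)=\widehat\gamma(u(\theta,t),t)$ to split $\gamma_t$ into a tangential drift plus $\bar k\,\n$, observe that $\n$ is independent of $t$ so $S_t=\langle\gamma_t,-\n\rangle$, and then substitute \eqref{E-08}. The paper's own proof is the same computation, just more tersely written.
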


\begin{proof}
Observe that $\partial\widehat\gamma/\partial u$ is orthogonal to $\n$ and
\begin{equation*}
 \frac{\partial\gamma}{\partial t}
 =\frac{\partial\widehat\gamma}{\partial u}\cdot\frac{{\partial u}}{\partial t}
  +\frac{\partial\gamma}{\partial t}
 =\frac{\partial\widehat\gamma}{\partial u}\cdot\frac{{\partial u}}{\partial t}
  +\bar k\cdot\n.
\end{equation*}
Using this, \eqref{E-S1} and equality $\n_t=0$, see \eqref{E-01c}, we obtain
\begin{equation}\label{E-10a}
 S_t = \frac{\partial}{\partial t} \<\gamma(\theta,t),-\n\>
 =\<\frac{\partial\gamma}{\partial t},-\n\>
 = -\bar k .
\end{equation}
Then we apply \eqref{E-08}.
\end{proof}

By the theory of parabolic equations we have the following.

\begin{proposition}[Local existence and uniqueness]\label{P-loc-exist}
Let $\gamma_0$ be a convex closed curve in the metric-affine plane. Then there exists a unique family of convex closed curves
$\gamma(\cdot,t),\ t\in[\,0,t_0)$ with $t_0>0$, and $\gamma(\cdot,0)=\gamma_0$ satisfying~\eqref{E-05}.
\end{proposition}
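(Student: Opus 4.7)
The plan is to reformulate the vector-valued geometric flow \eqref{E-05} as a scalar quasilinear parabolic PDE for the support function on $S^{1}$, and then invoke standard local existence/uniqueness theory.

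Since $\gamma_{0}$ is strictly convex, $k>0$ on $\gamma_{0}$, and by \eqref{E-07} the radius-of-curvature function $\rho_{0}(\theta):=S_{0,\theta\theta}(\theta)+S_{0}(\theta)=1/k(\theta,0)$ is a strictly positive smooth function on $S^{1}$. By the computation leading to \eqref{E-10}, the support function $S(\theta,t)=\<\gamma(\theta,t),-\n\>$ of any convex solution of \eqref{E-05} must satisfy
\begin{equation*}
 S_{t}=-(S_{\theta\theta}+S)^{-1}-\Psi(\theta),\qquad S(\cdot,0)=S_{0},
\end{equation*}
with $\Psi$ a smooth $2\pi$-periodic function of $\theta$, cf.\ \eqref{E-phi0}. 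Conversely, every smooth $2\pi$-periodic solution $S$ with $S_{\theta\theta}+S>0$ defines via \eqref{E-06} a strictly convex closed curve solving \eqref{E-05} up to tangential reparametrization (which is geometrically inessential), so the problem reduces to this scalar Cauchy problem on $S^{1}$.

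The linearization of the right-hand side at a profile with $S_{\theta\theta}+S>0$ has principal part $(S_{\theta\theta}+S)^{-2}\,\partial_{\theta}^{\,2}$, so the equation is uniformly parabolic on a neighbourhood of $S_{0}$. I would then invoke a standard local-existence theorem for quasilinear parabolic equations on the compact manifold $S^{1}$ with smooth coefficients, e.g.\ the Schauder-type result in parabolic Hölder spaces $C^{2+\alpha,\,1+\alpha/2}(S^{1}\times[\,0,t_{0}])$ or the semigroup/inverse-function framework used in the Euclidean case in \cite{zhu}, to obtain a unique classical solution on a short interval $[\,0,t_{0})$; parabolic bootstrapping then yields $C^{\infty}$-smoothness for $t>0$. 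By continuity of $S_{\theta\theta}+S$ in $t$ and compactness of $S^{1}$, the positivity $S_{\theta\theta}+S>0$ persists on a possibly smaller interval, which ensures that the curves reconstructed from \eqref{E-06} remain strictly convex.

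Uniqueness at the level of the geometric flow follows from uniqueness at the level of $S$: any two convex solutions of \eqref{E-05} with the same initial datum admit the normal-angle parametrization, their support functions both solve the scalar Cauchy problem above, and hence they coincide as geometric curves. The principal obstacle I expect is not geometric but analytic: the careful verification of parabolicity together with the choice of the functional-analytic setting (Hölder versus Sobolev spaces on $S^{1}$) in which a black-box quasilinear local-existence theorem applies. The geometric reduction and the short-time preservation of convexity are then essentially continuity arguments.
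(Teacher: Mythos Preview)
Your proposal is correct and follows essentially the same approach as the paper: reduce \eqref{E-05} to the scalar evolution \eqref{E-10} for the support function, check that its linearization has positive leading coefficient $(S_{\theta\theta}+S)^{-2}$, and appeal to standard quasilinear parabolic theory. In fact your write-up is more complete than the paper's, which only verifies parabolicity of the linearized equation and then invokes ``the theory of parabolic equations'' without discussing the converse reconstruction, uniqueness, or the continuity argument for short-time convexity.
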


\begin{proof}
We will show that \eqref{E-10} is parabolic on $S(\theta,t)$.
To approximate \eqref{E-10} linearly, consider the second order partial differential equation $\partial_{t}{S}=f$ for ${S}(\theta,t)$,
where
\[
 f({S},{S}_{\theta\theta},\theta) = -({{S}_{\theta\theta} +{S}})^{-1}-\Psi.
\]
Take the initial point $\widetilde{S}=(\widehat{S},\widehat{S}_{\theta\theta},\widetilde{\theta})$
and set $h={S} - \widehat{S}$ for the difference of support functions.
Then
\begin{eqnarray*}
 && f({S},{S}_{\theta\theta},\theta)\approx f(\widehat{S},\widehat{S}_{\theta\theta},\widetilde{\theta})
 +\frac{\partial f}{\partial{S}}|_{\widetilde{S}}\cdot h
 +\frac{\partial f}{\partial {S}_{\theta\theta}}|_{\widetilde{S}}\cdot h_{\theta\theta} +\frac{\partial f}{\partial \theta}|_{\widetilde{S}}\cdot(\theta-\tilde\theta),
\end{eqnarray*}
where
 $\frac{\partial f}{\partial {S}}|_{\widetilde{S}} = (\widehat{S}_{\theta\theta}+\widehat{S})^{-2}$,
 $\frac{\partial f}{\partial {S}_{\theta\theta}}|_{\widetilde{S}} = (\widehat{S}_{\theta\theta}+\widehat{S})^{-2}$
 and
 $\frac{\partial f}{\partial \theta}|_{\widetilde{S}} = -\Psi_\theta$.
Hence, the linearized partial differential equation for $h$ is
\begin{equation}\label{E-lnearized}
 \partial_{t}h=(\widehat{S}_{\theta\theta}+\widehat{S})^{-2}\,(h_{\theta\theta}+h)-\Psi_{\theta}\cdot(\theta-\tilde\theta).
\end{equation}
The coefficient $(\widehat{S}_{\theta\theta}+\widehat{S})^{-2}$ of $h_{\theta\theta}$ is positive, therefore, \eqref{E-lnearized} is parabolic.
\end{proof}

\begin{proposition}[Containment principle]\label{L-06}
Let convex closed  curves $\gamma_{1}$ and $\gamma_{2}:\,S^{1}\times[\,0,t_0)\rightarrow \RR^{2}$
in the metric-affine plane be solutions of \eqref{E-05} and $\gamma_2(\cdot,0)$ lie in the domain enclosed by $\gamma_1(\cdot,0)$. Then $\gamma_{2}(\cdot,t)$ lies in the domain enclosed by $\gamma_{1}(\cdot,t)$
for all $t\in [\,0,t_0)$.
\end{proposition}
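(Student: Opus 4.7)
The plan is to reduce the geometric containment to a pointwise inequality between the two support functions and then apply the parabolic maximum principle. Let $S_1,S_2$ denote the support functions of $\gamma_1(\cdot,t)$ and $\gamma_2(\cdot,t)$, both parametrized by the common normal angle $\theta$; this common parametrization is available because by Proposition~\ref{P-loc-exist} both curves remain strictly convex on $[0,t_0)$. For convex bodies the support function characterizes inclusion: $K_2\subset K_1$ iff $h_{K_2}\le h_{K_1}$ pointwise. Hence $\gamma_2(\cdot,t)$ lies in the domain enclosed by $\gamma_1(\cdot,t)$ iff $w(\theta,t):=S_1(\theta,t)-S_2(\theta,t)\ge 0$ for every $\theta\in S^1$, and the hypothesis gives $w(\cdot,0)\ge 0$.

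The crucial observation is that, under the parallelism assumption \eqref{E-05p}, the function $\Psi$ in \eqref{E-10} depends only on $\theta$ via the explicit formula \eqref{E-phi0}, not on the particular curve. Subtracting the two copies of \eqref{E-10} therefore cancels the $\Psi$-terms exactly, and one computes
\begin{equation*}
 w_t
 =\frac{(S_{1,\theta\theta}+S_1)-(S_{2,\theta\theta}+S_2)}{(S_{1,\theta\theta}+S_1)(S_{2,\theta\theta}+S_2)}
 =\alpha(\theta,t)\,(w_{\theta\theta}+w),
\end{equation*}
where $\alpha=k_1k_2>0$ on every compact subinterval of $[0,t_0)$ by \eqref{E-07} and strict convexity. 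This is a linear uniformly parabolic equation for $w$ on the compact circle $S^1$, with smooth bounded coefficients on $[0,t_0-\varepsilon]$ for each $\varepsilon>0$.

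To finish I invoke the weak maximum principle. The zero-order coefficient $\alpha$ is positive, so one absorbs it in the standard way by setting $v=e^{-Ct}w$ with $C>\sup\alpha$; the new equation $v_t=\alpha v_{\theta\theta}+(\alpha-C)v$ has nonpositive zero-order term, hence $\min_\theta v(\cdot,t)$ is nondecreasing in $t$. Combined with $w(\cdot,0)\ge 0$ this yields $w\ge 0$ on $S^1\times[0,t_0-\varepsilon]$, and since $\varepsilon>0$ was arbitrary, on all of $S^1\times[0,t_0)$, which is the desired containment. The only substantive obstacle is precisely the cancellation of $\Psi$: if $\Psi$ depended on the curve itself (e.g.\ if the parallelism hypothesis \eqref{E-05p} were dropped), the subtraction would leave a residual zero-order term whose sign one would have to control by additional geometric hypotheses; in the present setting \eqref{E-05p} turns this into a one-line observation, and the rest is the classical maximum-principle argument.
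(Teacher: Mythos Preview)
Your argument is correct and follows exactly the paper's route: subtract the two copies of \eqref{E-10}, note that the $\Psi$-terms cancel because $\Psi$ depends only on $\theta$, arrive at the linear parabolic equation $w_t=k_1k_2(w_{\theta\theta}+w)$, and invoke the scalar maximum principle. One small slip in your maximum-principle step: after the substitution $v=e^{-Ct}w$ the spatial minimum of $v$ is \emph{not} nondecreasing in general (it can decrease from a positive value, e.g.\ in the ODE caricature $v'=(\alpha-C)v$); what the nonpositive zero-order coefficient actually guarantees is that the set $\{v\ge 0\}$ is preserved, which is exactly what you need and use.
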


\begin{proof}
Let ${S}_{i}(\theta,t)$ be the support function of $\gamma_{i}(\cdot,t)$ for $0\le t<t_0$ and $i=1,2$.
These $\gamma_i$ satisfy \eqref{E-05} with the same function $\Psi$. Denote $\widetilde{S}= {S}_{1}- {S}_{2}$.
Since $\gamma_{1}$ and $\gamma_{2}$ are convex for all $t$, their curvatures ${k}_{i}$ are positive.
Using \eqref{E-07} and \eqref{E-10}, we get the parabolic equation
\begin{equation*}
 {\widetilde{S}}_t = k_1 k_2(\widetilde{S}_{\theta\theta}+\widetilde{S})
\end{equation*}
with the initial value $\widetilde S(\theta,0)\ge0$.
Applying the scalar maximum principle of parabolic equations, e.g. \cite[Section~1.2]{cz},
we deduce that $\widetilde S(\theta,t)\ge 0$.
Hence, $\gamma_{2}(\cdot,t)$ lies in the domain enclosed by $\gamma_{1}(\cdot,t)$
for all $t\in [\,0,t_0)$.
\end{proof}

\begin{proposition}[Preserving convexity]\label{L-07}
Let $[\,0,\omega)$ be the maximal time interval for the solution $\gamma(\cdot,t)$ of \eqref{E-05} in the metric-affine plane, and let the curvature of $\gamma_{0}$ obey condition $k_0 > 2\,c$.
Then the solution $\gamma(\cdot,t)$ remains convex on $[\,0,\omega)$ and
its curvature has a uniform positive lower bound $k_0-2c$ for all $t\in[\,0,\omega)$.
\end{proposition}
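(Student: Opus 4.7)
The natural quantity to track is the $\bar\nabla$-curvature $\bar k = k + \Psi$ rather than $k$ itself, because the right-hand side of \eqref{E-10} is precisely $-\bar k$, and $\Psi(\theta)$ is a fixed (time-independent) function of the normal angle $\theta$ by \eqref{E-phi0} together with the parallelism assumption \eqref{E-05p}. Differentiating the identity $k^{-1} = S_{\theta\theta}+S$ from \eqref{E-07} in $t$ with $\theta$ held fixed, and substituting \eqref{E-10}, yields after a direct computation the clean evolution equation
\begin{equation*}
 \bar k_t \;=\; k^2\bigl(\bar k_{\theta\theta} + \bar k\bigr),
\end{equation*}
with neither an inhomogeneous term nor any $\theta$-derivative of $\Psi$ surviving. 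The key point of passing to $\bar k$ is that the $9c$-type bound on $\Psi_{\theta\theta}+\Psi$ from the remark above is avoided entirely.

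I would then apply the scalar parabolic maximum principle, exactly as in the proof of Proposition~\ref{L-06}, to the function $\bar k$. At a spatial minimum of $\bar k(\cdot,t)$ one has $\bar k_{\theta\theta}\ge 0$, hence $\bar k_t \ge k^2\,\bar k$, so that $m(t):=\min_\theta \bar k(\cdot,t)$ is non-decreasing as long as it stays positive. From the bound \eqref{E-01b} and the hypothesis $k(\cdot,0)\ge k_0$ we have $m(0)\ge k_0 - c$, which is strictly positive since $k_0 > 2c$. Consequently $m(t)\ge k_0-c$ for every $t\in[\,0,\omega)$, and a second application of \eqref{E-01b} gives the pointwise inequality
\begin{equation*}
 k(\cdot,t) \;=\; \bar k(\cdot,t) - \Psi \;\ge\; (k_0-c) - c \;=\; k_0-2c \;>\; 0,
\end{equation*}
which simultaneously preserves convexity and supplies the advertised uniform lower bound.

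\textbf{Main obstacle.} The one delicate point is of bootstrap type: the maximum principle applies only where the equation for $\bar k$ is uniformly parabolic, i.e.\ where $k>0$. I would therefore run the argument on the largest sub-interval $[\,0,t^*)\subseteq[\,0,\omega)$ on which $\gamma(\cdot,t)$ is convex, which is non-empty by Proposition~\ref{P-loc-exist}, and then use the strict inequality $k\ge k_0-2c>0$ established above to rule out $t^*<\omega$: the curvature cannot degenerate to zero while it remains bounded below by a positive constant, so by continuity and a further application of the local existence statement at $t=t^*$ the convexity extends past $t^*$, contradicting maximality. Hence $t^*=\omega$ and the bounds hold on the entire interval $[\,0,\omega)$.
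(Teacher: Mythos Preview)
Your proposal is correct and follows essentially the same route as the paper: derive the evolution equation $\bar k_t = k^2(\bar k_{\theta\theta}+\bar k)$ from \eqref{E-10} and \eqref{E-07}, apply the scalar maximum principle to obtain $\min_\theta \bar k(\cdot,t)\ge \min_\theta \bar k(\cdot,0)\ge k_0-c$, and then use \eqref{E-01b} once more to recover $k\ge k_0-2c$. Your explicit treatment of the bootstrap/continuation issue (showing $t^*=\omega$) is in fact more careful than the paper's terse passage from the local interval $[\,0,\tilde\omega)$ to the full interval $[\,0,\omega)$.
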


\begin{proof}
By Proposition~\ref{P-loc-exist}, $\gamma(\cdot,t)$ is convex (i.e., $k>0$) on a time interval $[\,0,\tilde\omega)$ for some $\tilde\omega\le\omega$, and its support function satisfies \eqref{E-10} for
 $(\theta,t)\in S^1\times[\,0,\tilde\omega)$.
Taking derivative of $\bar k$ in $t$, see \eqref{E-08}, we get:
\begin{equation*}
 \bar{k}_{t} = \big(({{S}_{\theta\theta}+{S}})^{-1}\big)_{t}
 = -({S}_{\theta\theta}+{S})^{-2}({S}_{\,t\,\theta\theta} + {S}_{t})
 = k^{2}(\bar{k}_{\theta\theta} + \bar{k}) .
\end{equation*}
Thus, $\bar k(\theta,t)$ satisfies the following parabolic equation:
\begin{equation}\label{E-12}
 \bar{k}_{t} = k^{2}(\bar{k}_{\theta\theta} + \bar{k}) .
\end{equation}
Applying the maximum principle to \eqref{E-12}, we find
 $\min_{\,\theta\in S^1}\bar{k}(\theta,t)\ge\min_{\,\theta\in S^1} \bar{k}(\theta,0) = \bar{k}_0$
for $t \in [\,0,\tilde{\omega})$.
By conditions and \eqref{E-01b},
\[
 \bar k = {k} + \Psi \ge {k} -|\Psi|\ge {k}_0 - c > 0.
\]
This and equality \eqref{E-04} imply that the curvature $k$ of $\gamma(\cdot,t)$ has a uniform positive lower bound $k_0-2\,c$ for all $t\in[\,0,\omega)$.
\end{proof}

\begin{lemma}\label{P-04b}
Let $\gamma_t$ be a solution of \eqref{E-05} in the metric-affine plane with $\Psi$ given in \eqref{E-01a}.
Then in the coordinates, $\tilde\gamma_t=\gamma_t + t[a_{21},a_{12}]$ is a solution of \eqref{E-05} with
the $\bar\nabla$-curvature $\bar k= k+\tilde\Psi$ and $\tilde\Psi = a_{30}\sin^3\theta + a_{03}\cos^3\theta$.
\end{lemma}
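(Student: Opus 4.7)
The plan is to translate the statement into the support function formulation and directly verify the PDE \eqref{E-10} for the translated curve. Write $\vec v=[a_{21},a_{12}]$ so that $\tilde\gamma_t=\gamma_t+t\,\vec v$. Since translation is a Euclidean isometry it preserves the normal direction, so the same normal angle $\theta$ parametrizes both $\gamma_t$ and $\tilde\gamma_t$, which is what makes the support-function calculation go through without any reparametrization.

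First, I compute the support function $\tilde S$ of $\tilde\gamma_t$. From \eqref{E-S1} and \eqref{E-01c} one has $-\n=[\cos\theta,\sin\theta]$, so
\[
 \tilde S(\theta,t)=\<\gamma_t+t\,\vec v,\,-\n\>=S(\theta,t)+t\,(a_{21}\cos\theta+a_{12}\sin\theta).
\]
Because $\cos\theta$ and $\sin\theta$ lie in the kernel of $\partial_{\theta\theta}+\mathrm{id}$, the added term does not affect $S_{\theta\theta}+S$; hence $\tilde S_{\theta\theta}+\tilde S=S_{\theta\theta}+S$ and, by \eqref{E-07}, the $\nabla$-curvature is unchanged, as geometrically expected.

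Next, I differentiate $\tilde S$ in $t$ and substitute \eqref{E-10} for $S_t$:
\[
 \tilde S_t=S_t+a_{21}\cos\theta+a_{12}\sin\theta=-(S_{\theta\theta}+S)^{-1}-\Psi+a_{21}\cos\theta+a_{12}\sin\theta.
\]
By \eqref{E-phi0}, the linear-in-$\sin\theta,\cos\theta$ part of $\Psi$ is precisely $a_{12}\sin\theta+a_{21}\cos\theta$, so it cancels the contribution from the translation, leaving
\[
 \tilde S_t=-(\tilde S_{\theta\theta}+\tilde S)^{-1}-\tilde\Psi,\qquad \tilde\Psi=a_{30}\sin^3\theta+a_{03}\cos^3\theta.
\]
This is exactly \eqref{E-10} with $\Psi$ replaced by $\tilde\Psi$, which by \eqref{E-08} and \eqref{E-10a} is equivalent to $\tilde\gamma_t$ satisfying \eqref{E-05} with the curvature function $\bar k=k+\tilde\Psi$.

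There is no real obstacle here beyond careful bookkeeping of signs: the only thing to watch is that the conventions of \eqref{E-S1} and \eqref{E-01c} are respected, so that the linear correction to the support function induced by the translation exactly matches, with the right sign, the linear-in-$\sin\theta,\cos\theta$ terms of $\Psi$ listed in \eqref{E-phi0}. The structural reason the lemma is true, and the reason this particular translation vector $[a_{21},a_{12}]$ works, is that $\cos\theta$ and $\sin\theta$ are the only Fourier modes annihilated by the curvature operator $\partial_{\theta\theta}+\mathrm{id}$, so translations can remove exactly and only the first-order trigonometric part of $\Psi$ without otherwise modifying the evolution equation.
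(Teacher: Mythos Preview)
Your proof is correct and follows essentially the same approach as the paper: compute the support function of the translated curve, observe that the added linear-in-$\sin\theta,\cos\theta$ term is annihilated by $\partial_{\theta\theta}+\mathrm{id}$ (so the curvature is unchanged), and then check that the time derivative of $\tilde S$ satisfies \eqref{E-10} with the reduced $\tilde\Psi$. Your version is a bit more explicit about the kernel argument and adds a helpful structural remark on why precisely the first Fourier modes can be removed by translation, but the argument is the same as the paper's.
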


\begin{proof}
By \eqref{E-S1}, the support function $\widetilde S(t,\cdot)$ of
the curve $\tilde\gamma_t$, obtained by parallel translation from the curve $\gamma_t$, thus, having the same curvature
$\tilde k=k$, satisfies
\[
 \widetilde S(\theta,t) = S(\theta,t) + t(a_{21}\cos\theta + a_{12}\sin\theta).
\]
This, \eqref{E-04} and \eqref{E-10a} yield $\widetilde S_t= - k - \tilde\Psi$,
where
$\tilde\Psi=\<{\mathfrak T}(\tilde\T,\tilde\T), \tilde\n\>$ is defined for $\tilde\gamma$ and
has the view $\tilde\Psi = \Psi - a_{12}\sin\theta - a_{21}\cos\theta$.
Using \eqref{E-phi0} for $\Psi$, completes the proof.
\end{proof}

By Lemma~\ref{P-04b}, we can assume the equalities $a_{21}=a_{12}=0$, i.e.,
\begin{equation*}
 {\mathfrak T}^{2}_{12}+{\mathfrak T}^{2}_{21}-{\mathfrak T}^{1}_{11}=0,\quad
 {\mathfrak T}^{1}_{12}+{\mathfrak T}^{1}_{21}-{\mathfrak T}^{2}_{22}=0.
\end{equation*}
In abbreviated notation, we will omit `tilde' for $\tilde\Psi$, $\tilde\gamma_t$ and $\widetilde S$.
Hence,
\begin{equation}\label{E-phi-2c}
 \Psi = a_{30}\sin^3\theta + a_{03}\cos^3\theta,\ \
 {\rm where}\ \ a_{03}= {\mathfrak T}^{2}_{22},\ \ a_{30}={\mathfrak T}^{1}_{11}.
\end{equation}
From Lemma~\ref{P-04b}, see also \eqref{E-phi-2c}, we conclude the following.

\begin{proposition}\label{P-reduce}
If $a_{30}=a_{03}=0$, see \eqref{E-phi0} and \eqref{E-phi}, then the problem \eqref{E-05} in the metric-affine plane reduces to the classical problem \eqref{E-01} in the Euclidean plane for modified by parallel translation of $\gamma_t$ curves $\tilde\gamma_t=\gamma_t + t[a_{21},a_{12}]$.
\end{proposition}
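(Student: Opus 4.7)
The plan is to derive Proposition~\ref{P-reduce} as a direct corollary of Lemma~\ref{P-04b}, so I would not redo the coordinate computations but instead specialize Lemma~\ref{P-04b} to the hypothesis $a_{30}=a_{03}=0$ and read off what remains.

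First I would invoke Lemma~\ref{P-04b}, which provides the family $\tilde\gamma_t=\gamma_t+t[a_{21},a_{12}]$ and establishes that it solves \eqref{E-05} with $\bar\nabla$-curvature of the form $\bar k=k+\tilde\Psi$, where $\tilde\Psi=a_{30}\sin^3\theta+a_{03}\cos^3\theta$ after the linear angular terms $a_{12}\sin\theta+a_{21}\cos\theta$ have been absorbed by the translation. The key algebraic observation is that under the assumed vanishing $a_{30}=a_{03}=0$, the function $\tilde\Psi$ is identically zero on $S^1$, hence $\bar k\equiv k$ along $\tilde\gamma_t$.

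Consequently, the defining equation $\partial\tilde\gamma/\partial t=\bar k\,\n$ becomes $\partial\tilde\gamma/\partial t=k\,\n$, which is precisely the classical Euclidean CSF \eqref{E-01}. Since at $t=0$ the translation vector $t[a_{21},a_{12}]$ vanishes, we have $\tilde\gamma_0=\gamma_0$, so the initial data are preserved. Thus $\tilde\gamma_t$ solves the classical problem \eqref{E-01} with the same initial curve, and the two solutions differ at each time $t$ only by the time-dependent parallel translation by $t[a_{21},a_{12}]$. This yields the claimed reduction.

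There is no genuine obstacle here; the only thing to verify carefully is that the identification $\tilde\Psi\equiv0$ really turns \eqref{E-05} into \eqref{E-01}, which follows by inspection from \eqref{E-04}. In short, once Lemma~\ref{P-04b} is in hand, Proposition~\ref{P-reduce} is an immediate specialization, and I would present it as a one-line consequence, emphasizing that the parallel translation by $t[a_{21},a_{12}]$ is the exact geometric gauge that kills the linear part of $\Psi$ in \eqref{E-phi0} while the cubic part vanishes by hypothesis.
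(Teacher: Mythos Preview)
Your proposal is correct and matches the paper's own treatment: the paper states Proposition~\ref{P-reduce} immediately after Lemma~\ref{P-04b} with the remark ``From Lemma~\ref{P-04b}, see also \eqref{E-phi-2c}, we conclude the following,'' i.e., exactly the specialization $a_{30}=a_{03}=0\Rightarrow\tilde\Psi\equiv0$ that you spell out. Your write-up simply makes explicit what the paper leaves as a one-line consequence.
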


\begin{example}\label{L-08}\rm
One may show that
\[
 {S}(\theta,t) = \rho(t) -\epsilon_{1}(t)\sin\theta -\epsilon_{2}(t)\cos\theta
\]
with
\begin{equation}\label{E-circ}
  \rho(t)= \sqrt{\rho^{2}(0)-2t},\quad \epsilon_{1}(t) = a_{12}\,t,\quad
  \epsilon_{2}(t) = a_{21}\,t,\quad 0 \le t \le \rho^{2}(0)/2,
\end{equation}
is the support function of a special solution
of \eqref{E-05} with $a_{30}=a_{03}=0$.
We claim that the solution is a family of round circles of radius $\rho(t)\ (t\ge0)$ shrinking to a point
at the time $t_0=\frac12\,\rho^{2}(0)$.
Indeed, by \eqref{E-06}, $S(\theta,t)$ corresponds to a family of circles
\[
 \gamma_t=[\,\rho(t)\cos\theta-\epsilon_{2}(t),\,\rho(t)\sin\theta-\epsilon_{1}(t)\,]
\]
with centers $(\,-\epsilon_{2}(t),\,-\epsilon_{1}(t)\,)$ and the curvature $k = -{1}/{\rho(t)}$.
We then calculate
\begin{equation*}
 {S}_t = \rho'(t) - \epsilon'_{1}(t)\sin\theta - \epsilon'_{2}(t)\cos\theta, \quad
 {S}_{\theta\theta}  = \epsilon_{1}(t)\sin\theta + \epsilon_{2}(t)\cos\theta.
\end{equation*}
Thus, ${{S}_{\theta\theta}+{S}}= \rho(t)$ holds, and \eqref{E-10} reduces to
\[
 \rho' - \epsilon'_{1}\sin\theta - \epsilon'_{2}\cos\theta = -{1}/{\rho} - \Psi,
\]
where $\theta$ is arbitrary.
We get the system of three ODEs:
\begin{equation*}
 \rho' = -{1}/{\rho},\quad \epsilon'_{1} = a_{12},\quad \epsilon'_{2} = a_{21}.
\end{equation*}
Its solution with initial conditions $\epsilon_{i}(0)=0$ is \eqref{E-circ}.
\end{example}



\begin{example}\label{Ex-semi-symm}\rm
(a)~The \textit{projective connections} $\bar\nabla=\nabla+\mathfrak{T}$ are defined by the condition
\[
 \mathfrak{T}_X Y=\<U,Y\>X+\<U,X\>Y,
\]
where $U$ is a given vector field, e.g., \cite{mikes}.
Then $\Psi=\<\mathfrak{T}_T T,N\>=0$, see \eqref{E-01a}.
Thus, \eqref{E-05} in the metric-affine plane with a projective connection
is equal to \eqref{E-01} in the Euclidean~plane.

(b)~The \textit{semi-symmetric connections} $\bar\nabla=\nabla+\mathfrak{T}$ are defined by the condition
\[
 \mathfrak{T}_X Y=\<U,Y\>\,X-\<X,Y\>\,U,
\]
where $U$ is a given vector field, e.g. \cite{Yano}.
Such connections are metric compatible, and for them the formulas \eqref{E-03} are valid.
The definition \eqref{E-01a} reads 
\[
 \Psi=-\<U,N\> = -\<U,e_1\>\cos\theta -\<U,e_2\>\sin\theta.
\] 
Then, see \eqref{E-phi},
 $a_{30}=\<U, e_2-e_1\>=-a_{03}$.
Let $U$ be a constant vector field on $\RR^2$, then we can take the orthonormal frame $\{e_{1},e_{2}\}$ in $(\RR^{2},g,\bar\nabla)$
such that $U$ is orthogonal to $e_{1}-e_{2}$.
Thus, see Proposition~\ref{P-reduce}, the problem \eqref{E-05} in the metric-affine plane with a semi-symmetric connection
and constant $U$ reduces to the problem \eqref{E-01} in the Euclidean~plane.
\end{example}

\begin{proposition}[Finite time existence]
Let a convex closed curve $\gamma_0$ in the metric-affine plane with condition $k_0 > 2\,c$
be evolved by \eqref{E-05}.
Then, the solution $\gamma_{t}$ must be singular at some time $\omega>0$.
\end{proposition}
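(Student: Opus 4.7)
The plan is to exhibit a uniformly negative upper bound on the time derivative of the length $L(\gamma_t)$; since $L(\gamma_t)>0$ as long as the convex curve exists, this forces the maximal existence time $\omega$ to be finite.

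The key computation is the standard first variation of arclength: for any plane curve evolving by $\partial_t\gamma=V\,\n$ with inner normal $\n$, one has $\partial_t(ds)=-V k\,ds$, so
\[
 \frac{d}{dt}L(\gamma_t)=-\int_{\gamma_t}\bar k\, k\,ds.
\]
By Proposition~\ref{L-07}, $\gamma_t$ remains strictly convex on $[0,\omega)$, so I can reparameterize by the normal angle $\theta$; the identity $d\theta=k\,ds$ converts the above into
\[
 \frac{d}{dt}L(\gamma_t) = -\int_0^{2\pi}\bar k\,d\theta
 = -\int_0^{2\pi} k\,d\theta - \int_0^{2\pi}\Psi\,d\theta.
\]
The second integral vanishes identically: by \eqref{E-phi0}, $\Psi$ is a linear combination of $\sin\theta,\cos\theta,\sin^3\theta,\cos^3\theta$, and each of these has zero mean over a full period. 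Combining this with the uniform lower bound $k\ge k_0-2c>0$ from Proposition~\ref{L-07} yields
\[
 \frac{d}{dt}L(\gamma_t)\le -2\pi(k_0-2c)<0,
\]
so $L(\gamma_t)\le L(\gamma_0)-2\pi(k_0-2c)\,t$, and therefore $\omega\le L(\gamma_0)/(2\pi(k_0-2c))<\infty$.

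I do not expect a real obstacle, since every ingredient---local existence, preservation of convexity, the pointwise curvature lower bound, and the parameterization by $\theta$---is already in place. The only point worth underlining is why the borderline hypothesis $k_0>2c$ (rather than the strictly stronger $k_0>3c$ of Theorem~\ref{T-01main}) is enough: it is exactly the cancellation $\int_0^{2\pi}\Psi\,d\theta=0$. The analogous calculation for the enclosed area gives $\frac{d}{dt}A(\gamma_t)=-\int_0^{2\pi}(1+\Psi/k)\,d\theta$, in which the $\Psi/k$ term does \emph{not} integrate to zero and one only obtains a strictly negative right-hand side under $k_0>3c$---precisely matching the explicit time bound stated in Theorem~\ref{T-01main}.
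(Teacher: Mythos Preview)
Your argument is correct, and it is genuinely different from the paper's proof. The paper proceeds by a comparison/containment argument: after using the translation trick of Lemma~\ref{P-04b} (and a further rotation) to reduce $\Psi$ to the form $\tilde a\sin^3\theta$, it encloses $\gamma_0$ in a large circle $\Gamma_0$, evolves $\Gamma_0$ by \eqref{E-05}, and sandwiches the resulting $\Gamma_t$ between two explicit translating-circle solutions $\Gamma_t^{\pm}$; since ${\rm conv}(\Gamma_t^+\cup\Gamma_t^-)$ collapses to a segment in finite time, so must $\gamma_t$. Your route is far more direct: you integrate the first variation of length, change variables via $d\theta=k\,ds$, and use the single observation that $\int_0^{2\pi}\Psi\,d\theta=0$ because each term in \eqref{E-phi0} has zero mean. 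Coupled with the pointwise bound $k\ge k_0-2c$ from Proposition~\ref{L-07}, this gives the quantitative estimate $\omega\le L(\gamma_0)/\bigl(2\pi(k_0-2c)\bigr)$, which the paper's proof of this proposition does not produce (its explicit time bound, via the enclosed area, is stated separately and requires the stronger hypothesis $k_0>3c$). The paper's approach, on the other hand, supplies geometric information your argument does not: an explicit barrier region containing $\gamma_t$ for all $t$.
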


\begin{proof}
By Lemma~\ref{P-04b} and Example~\ref{L-08},
using translations we can assume $\Psi=a_{30}\sin^3\theta + a_{03}\cos^3\theta$,
see \eqref{E-phi-2c}. Then we calculate
\begin{equation*}
 a_{30}\sin^3\theta + a_{03}\cos^3\theta
 = -\frac14\sqrt{a_{30}^2 + a_{03}^2}\,\big(\underline{\cos(\theta-\theta_0)}-\cos(3\theta+\theta_0)\big) \underline{+\,a_{30}\sin\theta  + a_{03}\cos\theta}
\end{equation*}
for some $\theta_0$.
By Lemma~\ref{P-04b} again and using the rotation $\theta\to\theta-\theta_0$, the underlined terms can be canceled, and the retained expression will be $\frac14\sqrt{a_{30}^2 + a_{03}^2}\cos(3\theta+\theta_0)$, which can be
reduced to simpler form $\tilde a\sin^3\theta$ for some $\tilde a\in\RR$, using the identity $\sin 3\theta =3\sin\theta-4\sin^3\theta$.

 Thus, we may assume $\Psi=\tilde a\sin^3\theta$ with $\tilde a<0$.
Let
$\gamma_{0}$ lies in a circle $\Gamma_{0}$ of radius
\[
 \rho(0)\ge \max_{\,\theta\in S^1}S(\cdot,0)/(1-2c/k_0)
\]
and centered at the origin $O$.
Let evolve $\Gamma_{0}$ by \eqref{E-05} to obtain a solution~$\Gamma(\cdot,t)$ with support function $S^{\,\Gamma}$.
By~Proposition~\ref{L-06}, $\gamma_{t}$ lies in the domain enclosed by $\Gamma(\cdot,t)$, thus, $S\le S^{\,\Gamma}$.
 Consider two families of circles, see Example~\ref{L-08},
\[
 \Gamma^\pm_t = [\rho(t)\cos\theta,\,\rho(t)\sin\theta \pm t \tilde a],\quad
 \rho(t)= \sqrt{\rho^{2}(0)-2\,t},
\]
being solutions of \eqref{E-05}, hence, having support functions satisfying \eqref{E-10},
\begin{eqnarray*}
 S^\pm_t = (S^\pm_{\theta\theta}+S^\pm)^{-1} \mp t\tilde a\sin\theta =\rho(t)\mp t\tilde a\sin\theta.
\end{eqnarray*}
By Proposition~\ref{L-06}, $S_t \le S^{\,\Gamma}_t$ holds,
and since $|\sin^3\theta|\le|\sin\theta|$, we also have
\begin{eqnarray*}
 S^{\,\Gamma}_t \le \Big\{\begin{array}{cc}
              S^+_t, & 0\le\theta\le\pi, \\
              S^-_t, & \pi\le\theta\le2\pi.
            \end{array}
\end{eqnarray*}
Hence, $\Gamma_t$ lies (in $\RR^2$) below any tangent line to the upper semicircle $\Gamma^+_t$
and above any tangent line to the lower semicircle $\Gamma^-_t$.
Thus, $\Gamma_t\subset{\rm conv}(\Gamma^+_t\cup\Gamma^-_t)$.
The solution $\Gamma^\pm(\cdot,t)$ exists only at a finite time interval $[\,0,\tau]$ with $\tau=\rho^{2}(0)/2$, and $\Gamma^\pm(\cdot,t)$ converges,  as $t\rightarrow\tau$, to a point $\Gamma^\pm_\tau=[\,0,\pm\tilde a\tau]$.
Hence, the convex hull of $\Gamma^+_{t}\cup \Gamma^-_{t}$ shrinks to the line segment with the endpoints
$(0,\tilde a\tau)$ and $(0,-\tilde a\tau)$. We conclude that the solution $\gamma_{t}$ must be singular at some time $\omega\le\tau$.
\end{proof}

\noindent
Note that a point or a line segment are the only compact convex sets of zero area in~$\RR^2$.

\begin{lemma}[Enclosed area]
Let a convex closed curve $\gamma_0$ in the metric-affine plane with condition $k_0 > 2\,c$
be evolved by \eqref{E-05}. Then
$\gamma(\cdot,\omega)$ is either a point or a line segment.
\end{lemma}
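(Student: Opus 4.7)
The remark preceding this lemma reduces the claim to showing that the enclosed area $A(t)$ of $\gamma(\cdot,t)$ vanishes as $t\uparrow\omega$. Indeed, $\gamma(\cdot,\omega)$ is a Hausdorff limit of uniformly bounded convex curves (they all lie in the convex hull $\mathrm{conv}(\Gamma^{+}_{t}\cup\Gamma^{-}_{t})$ from the preceding proof), hence a compact convex subset of $\RR^{2}$, which by the remark is a point or a line segment once its area is zero.

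First, I would derive the evolution of $A$. From $\partial\gamma/\partial t=\bar k\,N$ and the first-variation formula $dA/dt=-\int_{\gamma}\bar k\,ds$, reparameterizing the curve by the normal angle through $ds=d\theta/k$ and using \eqref{E-04} give
\begin{equation*}
\frac{dA}{dt} = -\int_{0}^{2\pi}\frac{\bar k}{k}\,d\theta = -2\pi - \int_{0}^{2\pi}\frac{\Psi}{k}\,d\theta.
\end{equation*}
By Proposition~\ref{L-07}, $k\ge k_{0}-2c>0$ on $[0,\omega)$, and $|\Psi|\le c$ by \eqref{E-01b}, so $dA/dt$ is uniformly bounded. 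Hence $A$ is Lipschitz on $[0,\omega)$ and the limit $A(\omega):=\lim_{t\uparrow\omega}A(t)\in[0,A(0))$ exists.

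Second, I would show $A(\omega)=0$. After the reductions made in the proof of the Finite Time Existence proposition we may assume $\Psi=\tilde a\sin^{3}\theta$ with $\tilde a<0$, and $\gamma(\cdot,t)\subseteq\mathrm{conv}(\Gamma^{+}_{t}\cup\Gamma^{-}_{t})$ on $[0,\omega)$; this enclosing convex hull shrinks to the segment with endpoints $(0,\pm\tilde a\tau)$ at $t=\tau=\rho(0)^{2}/2\ge\omega$. If $\omega=\tau$ the Hausdorff limit of $\gamma(\cdot,t)$ is contained in that segment and the conclusion is immediate. Otherwise $\omega<\tau$, and I would argue by contradiction: should $A(\omega)>0$, the limiting compact convex set would contain an open disk, so, taking its center as origin, the support function $S(\cdot,t)$ would be uniformly bounded above and below on $[0,\omega)$. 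Combined with $k\ge k_{0}-2c>0$, this would make the quasilinear equation \eqref{E-10} uniformly parabolic with bounded coefficients up to $t=\omega$; standard parabolic regularity and short-time existence would then extend the smooth solution past $\omega$, contradicting maximality of $\omega$. Hence $A(\omega)=0$ and $\gamma(\cdot,\omega)$ is a point or a line segment.

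The principal obstacle is the extension step in the case $\omega<\tau$: upgrading Hausdorff convergence of $\gamma(\cdot,t)$, together with positivity of area and the lower curvature bound, into sufficient regularity of $\gamma(\cdot,\omega)$ to restart the parabolic flow. A cleaner alternative avoiding this is Gage's isoperimetric inequality $\int_{\gamma}k^{2}\,ds\ge\pi L/A$ for convex curves, which combined with the perimeter bound $L=\int_{0}^{2\pi}d\theta/k\le 2\pi/(k_{0}-2c)$ yields $k_{\max}^{2}\ge\pi/A$; since the singularity at $\omega$ forces $k_{\max}\to\infty$, this directly gives $A(\omega)=0$ and sidesteps the need to restart the flow.
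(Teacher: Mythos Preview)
Your main route and the paper's diverge at exactly the point you flag as the ``principal obstacle,'' and the gap is real. Knowing only that $A(\omega)>0$ gives you two-sided bounds on $S$, but the coefficient in the linearization of \eqref{E-10} is $(S_{\theta\theta}+S)^{-2}=k^{2}$; uniform parabolicity therefore requires an \emph{upper} bound on $k$, which is precisely what is at stake. Without it you cannot invoke Schauder/short-time existence to extend past $\omega$, so the contradiction does not close.

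Your Gage alternative does not rescue this, because the inequality points the wrong way. From $\int_\gamma k^{2}\,ds\ge \pi L/A$ and $\int_\gamma k^{2}\,ds\le k_{\max}^{2}L$ you get $k_{\max}^{2}\ge \pi/A$, i.e.\ $A\ge \pi/k_{\max}^{2}$. This is a \emph{lower} bound on $A$; letting $k_{\max}\to\infty$ only sends the lower bound to $0$ and says nothing about $A(\omega)$. (The perimeter bound $L\le 2\pi/(k_0-2c)$ is not actually used, and no rearrangement of Gage's inequality yields an upper bound on $A$ in terms of $k_{\max}$.) So this route does not give $A(\omega)=0$.

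The paper's argument is different in kind: assuming the limit set has interior, it places a disk of radius $2\rho$ inside all late-time curves and studies the auxiliary quantity $\phi=\bar k/(S-\rho)$. At a spatial maximum in time one has $\phi_\theta=0$, $\phi_{\theta\theta}\le 0$, $\phi_t\ge 0$; combining these with \eqref{E-10}, \eqref{E-12} and \eqref{E-07} yields the algebraic inequality $\rho k^{2}\le 2k+c$, hence a uniform bound $k\le \rho^{-1}\bigl(1+\sqrt{1+c\rho}\bigr)$ on $S^{1}\times[0,\omega)$. This contradicts the (standard) blow-up of curvature at the maximal time, so $A(\omega)=0$. The key idea you are missing is this Chou--Zhu style test function, which converts the positivity of the support function into a direct curvature bound via the maximum principle, bypassing any appeal to general parabolic regularity.
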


\begin{proof}
Suppose the lemma is not true. We may assume the origin is contained in the interior of the region enclosed by $\gamma(\cdot,\omega)$.
We can draw a small circle, with radius $2\rho$ and centered at the origin, in the interior of the region enclosed by $\gamma(\cdot,\omega)$.

Since the solution $\gamma(\cdot,\omega)$ becomes singular at the time $\omega$, we know from the evolution equation \eqref{E-07} that the curvature $k(\cdot,t)$ becomes unbounded as $t\rightarrow \omega$.
To derive a contradiction, we only need to get a uniform bound for the curvature.
Consider
\[
 \phi=\frac{-{S}_{t}}{{S}-\rho}\overset{\eqref{E-10}}=\frac{\bar{k}}{{S}-\rho}.
\]
For any $\tilde\omega < \omega$, we can choose $(\theta_{0},t_{0})$ such that
\[
 \phi(\theta_{0},t_{0})= \max\{\,\phi(\theta,t):\, (\theta,t)\in S^{1}\times [\,0,\tilde\omega]\,\}.
\]
Without loss of generality, we may assume $t_{0}>0$.
Then at $(\theta_{0},t_{0})$,
\begin{eqnarray}\label{E-22}
\nonumber
 0 = \phi_{\theta} = \frac{-{S}_{t\theta}}{{S}-\rho} + \frac{{S}_{t}{S}_{\theta}}{({S}-\rho)^{2}},\\
 \nonumber
 0 \le \phi_{t} = \frac{-{S}_{tt}}{{S}-\rho} + \frac{{S}_{t}^{2}}{({S}-\rho)^{2}},\\
 0\ge \phi_{\theta\theta}= -\frac{{S}_{\theta\theta t}}{{S}-\rho} + \frac{{S}_{t}{S}_{\theta\theta}}{({S}-\rho)^{2}}.
\end{eqnarray}
On the other hand,
\begin{eqnarray*}
 && {S}_{tt}= -\bar{k}_{t}= -k^{2}(\bar k_{\theta\theta}+\bar k) ,\\
 && 0\overset{(\ref{E-22}b)}\le \phi_{t}
 =\frac{ k^{2}(\bar k_{\theta\theta}+ \bar k)}{{S}-\rho} +\frac{\bar{k}^{2}}{({S}-\rho)^{2}} \\
 && = \frac{k^{2}((-{S}_{t})_{\theta\theta}+\bar k)}{{S}-\rho} +\frac{\bar{k}^{2}}{({S}-\rho)^{2}}
 =\frac{k^{2}}{{S}-\rho}\,(-{S}_{t\theta\theta})
 +\frac{k^{2}\bar k
 }{{S}-\rho} +\frac{\bar{k}^{2}}{({S}-\rho)^{2}}.
\end{eqnarray*}
By the above,
\begin{eqnarray*}
 && 0\le \phi_{t}\overset{(\ref{E-22}c)}\le k^2\frac{-S_{t}S_{\theta\theta}}{({S}-\rho)^2} +\frac{k^{2}\bar k}{{S} -\rho} +\frac{\bar{k}^{2}}{({S}-\rho)^{2}} \\
 && =\frac{k^{2}\bar k}{({S}-\rho)^{2}}\big({S}_{\theta\theta}+{S}-\rho\big) +\frac{\bar{k}^{2}}{({S}-\rho)^{2}}
 \overset{(\ref{E-07})}=\frac{\bar k(k+\bar k -\rho k^2)}{({S}-\rho)^{2}}.
\end{eqnarray*}
Since $\bar k = k + \Psi(\theta)$ with $\bar k>0$,
see the proof of Proposition~\ref{L-07}, and using \eqref{E-01b}, we obtain
\begin{equation}\label{E-16a}
 \rho k^{2}\le k+\bar k = 2k +\Psi(\theta) \le 2k +c.
\end{equation}
From quadratic inequality \eqref{E-16a} we conclude that
\[
 0\le k\le \big(1+\sqrt{1+c\rho}\big)\rho^{-1}<\infty \quad {\rm on}\ \  S^1\times[\,0,\omega).
\]
Thus, $k$ is bounded as $t\uparrow\omega$, -- a contradiction.
Thus, the area enclosed by $\gamma(\cdot,t)$ tends to zero as $t\uparrow\omega$.
\end{proof}

The area enclosed by the convex curve $\gamma(\cdot,t)\subset\RR^2$, e.g., \cite[p.~6]{zhu}, is calculated by
\begin{equation}\label{E-area}
 A(t)= -\frac12\int_{\gamma(\cdot,t)}\<\gamma(\cdot,t),\n\>\,ds
 =\frac1{2}\int_0^{2\pi}\frac{S}{k}\,d\theta .
\end{equation}

\begin{proposition}
Let a convex closed curve $\gamma_0$ in the metric-affine plane be evolved by \eqref{E-05}.
If $k_0>3c$ then the maximal time $\omega$ is estimated by
\begin{equation}\label{E-18}
 \omega\le \frac{A(0)}{2\pi}\cdot\frac{k_0-2c}{k_0-3c}.
\end{equation}
\end{proposition}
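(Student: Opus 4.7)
The plan is to derive a differential inequality for the enclosed area $A(t)$ and integrate it up to the extinction time $\omega$, using that $A(\omega)=0$ by the preceding lemma (since a point or line segment has zero area).

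First, I would compute $A'(t)$ from the flow. Since $\partial\gamma/\partial t=\bar k\,\n$ with $\n$ the inward normal, the standard first-variation formula for enclosed area gives $A'(t)=-\int_{\gamma(\cdot,t)}\bar k\,ds$. Parameterizing by the normal angle (so that $ds=d\theta/k$ on a convex curve) and splitting $\bar k=k+\Psi$, this becomes
\begin{equation*}
 A'(t)=-\int_0^{2\pi}\Bigl(1+\frac{\Psi}{k}\Bigr)d\theta
 =-2\pi-\int_0^{2\pi}\frac{\Psi}{k}\,d\theta,
\end{equation*}
where I used the Gauss--Bonnet fact $\int k\,ds=2\pi$ for a convex closed plane curve.

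Next I would insert the two uniform bounds already established: the pointwise estimate $|\Psi|\le c$ from \eqref{E-01b}, and the preserved lower curvature bound $k\ge k_0-2c>0$ from Proposition~\ref{L-07} (valid on $[0,\omega)$ since $k_0>3c>2c$). These yield
\begin{equation*}
 \int_0^{2\pi}\frac{\Psi}{k}\,d\theta
 \ge -\frac{2\pi\, c}{k_0-2c},
 \qquad\text{hence}\qquad
 A'(t)\le -2\pi+\frac{2\pi\, c}{k_0-2c}=-\,\frac{2\pi(k_0-3c)}{k_0-2c}.
\end{equation*}
The right-hand side is a strictly negative constant precisely because of the hypothesis $k_0>3c$.

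Finally, integrating this inequality from $0$ to $\omega$ and using $A(\omega)=0$ gives
\begin{equation*}
 -A(\gamma_0)=A(\omega)-A(0)\le -\,\frac{2\pi(k_0-3c)}{k_0-2c}\,\omega,
\end{equation*}
which rearranges to the desired bound \eqref{E-18}. The proof is essentially routine once the area derivative is written in the form above; the only subtlety is that $k_0-3c$ in the denominator forces the strict inequality $k_0>3c$ (the weaker hypothesis $k_0>2c$ of Theorem~\ref{T-01main} would still give $A'(t)<0$ only at points, not uniformly). No serious obstacle is expected, provided one is careful to invoke the correct sign in the first-variation formula $A'(t)=-\int\bar k\,ds$ and to use the uniform lower bound on $k$ (not $\bar k$) in controlling the $\Psi/k$ integral.
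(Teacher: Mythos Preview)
Your proof is correct and follows essentially the same approach as the paper: both arrive at the identity $A'(t)=-2\pi-\int_0^{2\pi}\Psi/k\,d\theta$, then apply $|\Psi|\le c$ and $k\ge k_0-2c$ to obtain the differential inequality and integrate. The only cosmetic difference is that the paper derives $A'(t)$ by differentiating the support-function formula $A=\tfrac12\int S/k\,d\theta$ and integrating by parts, whereas you invoke the first-variation formula $A'=-\int\bar k\,ds$ directly; your route is shorter but leads to the same place.
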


\begin{proof}
Using \eqref{E-10a}, \eqref{E-12} and the identity
$\int_0^{2\pi} S(\bar k_{\theta\theta}+\bar k)\,d\theta = \int_0^{2\pi}(S_{\theta\theta}+S)\,\bar k\,d\theta$, we get
\begin{eqnarray*}
 \frac{d}{dt}\,A(t) = \frac1{2}\int_0^{2\pi}\frac{S_t k -S k_t}{k^2}\,d\theta
 = -\frac1{2}\int_0^{2\pi}\big[1+\Psi/k +S(\bar k_{\theta\theta}+\bar k)\,\big]\,d\theta  \\
 = -\frac1{2}\int_0^{2\pi}\big[1+\Psi/k +(S_{\theta\theta}+S)\bar k\,\big]\,d\theta
 = -2\pi - \int_0^{2\pi}\frac{\Psi(\theta)}{k(\theta,t)}\,d\theta.
\end{eqnarray*}
Using the inequality $k(\theta,t) \ge k_0 - 2c$, see Lemma~\ref{L-07}, and
$|\,\Psi|\le c$, see \eqref{E-01b}, we get
\begin{equation*}
 \frac{d}{dt}\,A(t) \le -2\pi+\frac{2\,\pi c}{k_0-2c}.
\end{equation*}
By this, we have $A(0)\ge
2\pi\frac{k_0-3c}{k_0-2c}\,\omega$.
Hence, the inequality \eqref{E-18} holds when $k_0>3c$.
\end{proof}

Question: can one estimate $\omega$ when $2\,c<k_0\le 3\,c$\,?

\smallskip

To complete the proof of Theorem~\ref{T-01main}, observe that
if the flow \eqref{E-05} does not converge to a point as the enclosed by $\gamma(\cdot,t)$ area tends to zero, then $\min\limits_{\theta\in S^1} k(\theta,t)$ tends to zero as $t\uparrow\omega$, -- a contradiction to Proposition~\ref{L-07}.

\section{Proof of Theorem~3}
\label{sec:T2}

Here, we study the normalized flow \eqref{E-05}.
From \eqref{E-area} we have
\[
 \frac{A(t)}{2(\omega-t)} = \pi +\frac{1}{2(\omega-t)}\int_t^\omega\int_\gamma \Psi(\theta)\,ds\,dt,
\]
hence,
 $\lim_{\,t\uparrow\omega}\frac{A(t)}{2(\omega-t)} = \pi$.
Without loss of generality, we may assume that the flow shrinks at the origin.
Thus, we rescale the solution $\gamma(\cdot, t)$ of \eqref{E-05} as
\[
 \tilde\gamma(\cdot, t) = (2(\omega-t))^{1/2}\gamma(\cdot, t).
\]
The corresponding support function and curvature are given by
\[
 \widetilde S(\cdot,t) = (2(\omega-t))^{1/2} S(\cdot,t),\quad
 \tilde k(\cdot,t) = (2(\omega-t))^{-1/2} k(\cdot,t).
\]
Introduce a new time variable $\tau\in [\,0,\infty)$ by
 $\tau = -(1/2)\log (1-\omega^{-1}\,t)$.
Using the above definitions, we find the partial differential equation for $\widetilde S$,
\begin{equation}\label{E-18a}
 \widetilde S_{\tau} = -(\tilde k +\sqrt{\,2\,\omega}\,e^{-\tau}\Psi) + \widetilde S,
\end{equation}
and that the normalized curvature, i.e., of the curves $\tilde\gamma(\cdot, \tau)$, satisfies the equation
\begin{equation*}
 \tilde k_{\tau} = \tilde k^2(\tilde k_{\theta\theta}+\tilde k) -\tilde k +\sqrt{\,2\,\omega}\,e^{-\tau}\tilde k^2(\Psi_{\,\theta\theta}+\Psi\,).
\end{equation*}
 The following steps for the ACEF, see \cite{cz}, are applicable to the normalized flow \eqref{E-05}:

1) The entropy for the normalized flow,
${\cal E}(\tilde\gamma(\cdot,\tau))=\frac1{2\pi}\int_0^{2\pi}\log\tilde k\,d\theta$,
is uniformly bounded for $\tau\in[\,0,\infty)$, see \cite[pages 63--68]{cz}.
The bound on the entropy yields upper bounds for the diameter and length of the normalized flow, and also that $\tilde k$
and its gradient are uniformly bounded.

2) $e^{-\tau}\tilde k_{\rm max}(\tau)\to0$ as $\tau\to\infty$, see \cite[Lemma~3.15]{cz}.

3) The normalized curvature $\tilde k$ has a positive lower bound, see \cite[pages 70--71]{cz}.

4) With two-sided bounds for $\tilde k$, the convergence of the normalized flow \eqref{E-05}, as $\tau\to\infty$, follows.
Namely,
noting that $\,e^{-\tau}\Psi\to0$ when $\tau\to\infty$,
for any sequence $\tau_j\to\infty$, we can find a subsequence $j_k$ such that solution $\widetilde S(\cdot, \tau_{j_k})$ of \eqref{E-18a} converges in $C^\infty$ topology (as $k\to\infty$) to a solution of the corresponding stationary equation
\begin{equation}\label{E-19}
 \widetilde S = \tilde k.
\end{equation}
Based on the fact \cite{al} that the only embedded solution of \eqref{E-19} is the unit circle,
we conclude (similarly as in \cite[p.~73]{cz} for ACEF with $\Phi=1$) that $\tilde\gamma(\cdot,\,\tau)$ converges, as $\tau\to\infty$, to the unit circle in $C^\infty$,
that completes the proof of Theorem~\ref{T-02main}.

\section{Conclusion}

The main contribution of this paper is a geometrical proof of convergence of CSF for convex closed curves in a metric-affine plane.
In the future, we will study several related problems on convergence of flows in metric-affine geometry, for example:

1) CSF \eqref{E-05} for non-constant contorsion tensor $\mathfrak{T}$ and for not just convex $\gamma_0$,

2) Anisotropic CSF in metric-affine geometry,

3) The mean curvature flow for convex hypersurfaces in the metric-affine $\RR^n$.

4) Numerical experiments (as in \cite{al}) for solutions of \eqref{E-05} when $\gamma_0$ is not embedded.

\bigskip

\textbf{Acknowledgments}.
The author would like to thank P.~Walczak (University of Lodz) for helpful comments concerning the manuscript.

\end{document}